%% Template article for Elsevier's document class `elsarticle'
%% with numbered style bibliographic references
%\documentclass[preprint,12pt]{elsarticle}

%% Use the option review to obtain double line spacing
%% \documentclass[authoryear,preprint,review,12pt]{elsarticle}

%% Use the options 1p,twocolumn; 3p; 3p,twocolumn; 5p; or 5p,twocolumn
%% for a journal layout:

%% \documentclass[final,1p,times]{elsarticle}
%% \documentclass[final,1p,times,twocolumn]{elsarticle}

\documentclass[final,3p]{elsarticle}
%% \documentclass[final,3p,times,twocolumn]{elsarticle}
%% \documentclass[final,5p,times]{elsarticle}
%% \documentclass[final,5p,times,twocolumn]{elsarticle}

%% For including figures, graphicx.sty has been loaded in
%% elsarticle.cls. If you prefer to use the old commands
%% please give \usepackage{epsfig}

%% The amssymb package provides various useful mathematical symbols
\usepackage{amssymb}
%% The amsthm package provides extended theorem environments
\usepackage{amsthm}

%% The lineno packages adds line numbers. Start line numbering with
%% \begin{linenumbers}, end it with \end{linenumbers}. Or switch it on
%% for the whole article with \linenumbers.
%% \usepackage{lineno}

%\journal{Nuclear Physics B}

\usepackage{amsmath}
\usepackage{mathrsfs}
\usepackage[color=Purple!50!white]{todonotes}
\newtheorem{thm}{Theorem}[section]
\newtheorem*{thm*}{Theorem}
\newtheorem{lemma}[thm]{Lemma}

\newtheorem{conjecture}[thm]{Conjecture}
\newtheorem{qu}[thm]{Question}
\theoremstyle{definition}

\newcommand{\flo}[1]{\lfloor #1 \rfloor}
\newcommand{\cei}[1]{\lceil #1 \rceil}

\tikzset{global scale/.style={
		scale=#1,
		every node/.append style={scale=#1}
	}
}

\linespread{1.1}
\begin{document}

\begin{frontmatter}

%% Title, authors and addresses

%% use the tnoteref command within \title for footnotes;
%% use the tnotetext command for theassociated footnote;
%% use the fnref command within \author or \address for footnotes;
%% use the fntext command for theassociated footnote;
%% use the corref command within \author for corresponding author footnotes;
%% use the cortext command for theassociated footnote;
%% use the ead command for the email address,
%% and the form \ead[url] for the home page:
%% \title{Title\tnoteref{label1}}
%% \tnotetext[label1]{}
%% \author{Name\corref{cor1}\fnref{label2}}
%% \ead{email address}
%% \ead[url]{home page}
%% \fntext[label2]{}
%% \cortext[cor1]{}
%% \affiliation{organization={},
%%             addressline={},
%%             city={},
%%             postcode={},
%%             state={},
%%             country={}}
%% \fntext[label3]{}

\title{Proof of a Conjecture on Online Ramsey Numbers of Stars versus Paths}

%% use optional labels to link authors explicitly to addresses:
%% \author[label1,label2]{}
%% \affiliation[label1]{organization={},
%%             addressline={},
%%             city={},
%%             postcode={},
%%             state={},
%%             country={}}
%%
%% \affiliation[label2]{organization={},
%%             addressline={},
%%             city={},
%%             postcode={},
%%             state={},
%%             country={}}

\author[inst1,inst2]{Ruyu Song}
%\ead{rysong@stu.hebtu.edu.cn}

\affiliation[inst1]{organization={School of Mathematical Sciences, Hebei Normal University},%Department and Organization
            %addressline={}, 
            city={Shijiazhuang},
            postcode={050024}, 
            state={Hebei},
            country={China}}

\author[inst1,inst2]{Sha Wang}
%\ead{swang@stu.hebtu.edu.cn}
\author[inst1,inst2]{Yanbo Zhang\corref{cor1}}
\cortext[cor1]{Corresponding author.}
\ead{ybzhang@hebtu.edu.cn}
\affiliation[inst2]{organization={Hebei International Joint Research Center for Mathematics and Interdisciplinary Science},%Department and Organization
            %addressline={Address Two}, 
            city={Shijiazhuang},
            postcode={050024}, 
            state={Hebei},
            country={China}}

\begin{abstract}
Given two graphs $G$ and $H$, the online Ramsey number $\tilde{r}(G,H)$ is defined to be the minimum number of rounds that Builder can always guarantee a win in the following $(G, H)$-online Ramsey game between Builder and Painter. Starting from an infinite set of isolated vertices, in each round Builder draws an edge between some two vertices, and Painter immediately colors it red or blue. Builder’s goal is to force either a red copy of $G$ or a blue copy of $H$ in as few rounds as possible, while Painter's goal is to delay it for as many rounds as possible. Let $K_{1,3}$ denote a star with three edges and $P_{\ell}$ a path with $\ell$ vertices. Latip and Tan conjectured that $\tilde{r}(K_{1,3}, P_{\ell})=(3/2+o(1))\ell$ [Bull.\ Malays.\ Math.\ Sci.\ Soc.\ 44 (2021) 3511--3521]. We show that $\tilde{r}(K_{1,3}, P_{\ell})=\lfloor 3\ell/2 \rfloor$ for $\ell\ge 2$, which verifies the conjecture in a stronger form.
\end{abstract}

%%Graphical abstract
%\begin{graphicalabstract}
%\includegraphics{grabs}
%\end{graphicalabstract}

%%Research highlights
%\begin{highlights}
%\item Research highlight 1
%\item Research highlight 2
%\end{highlights}

\begin{keyword}
%% keywords here, in the form: keyword \sep keyword
Ramsey number \sep Online Ramsey number \sep Path \sep Star
%% PACS codes here, in the form: \PACS code \sep code
%\PACS 0000 \sep 1111
%% MSC codes here, in the form: \MSC code \sep code
%% or \MSC[2008] code \sep code (2000 is the default)
\MSC[2020] 05C55 \sep 05C57 \sep 05D10
\end{keyword}

\end{frontmatter}

%% \linenumbers

%% main text
%%%%%%%%%%%%%%%%%%%%%%%%%%%%%%%%%%%%%%%%%%
\section{Introduction}

We are concerned with a game-theoretic notion called online Ramsey number in this paper. Given two graphs $G$ and $H$, the \emph{$(G, H)$-online Ramsey game} is a combinatorial game played on an infinite set of vertices between two players, Builder and Painter. Starting from an edgeless graph, in each round Builder draws an edge between two nonadjacent vertices, and Painter colors it red or blue immediately. Builder’s goal is to force either a red copy of $G$ or a blue copy of $H$ in as few rounds as possible, while Painter's goal is to delay it for as many rounds as possible. The \emph{online Ramsey number} $\tilde{r}(G,H)$ is the smallest number of rounds needed to create either a red copy of $G$ or a blue copy of $H$, assuming that both Builder and Painter play optimally.

The online Ramsey number was introduced by Beck \cite{Beck1993Achievement} and owes its name to Kurek and Ruci\'{n}ski \cite{Kurek2005Two}. It can be viewed as the online version of size Ramsey number. Recall that the Ramsey number $r(G,H)$ and the size Ramsey number $\hat{r}(G,H)$ are the smallest number of vertices and edges, respectively, in a graph $F$ such that for any red-blue edge coloring of $F$, there is either a red copy of $G$ or a blue copy of $H$. Clearly $\tilde{r}(G,H)\le \hat{r}(G,H)\le \binom{r(G,H)}{2}$.

In the classical case where both $G$ and $H$ are complete graphs, we write $m$, $n$ instead of $K_m$, $K_n$ for simplicity of notation. It is well known that the Ramsey number $r(n,n)$ is between $2^{n/2}$ and $2^{2n}$. Both bounds have seen no exponential improvements in decades \cite{Gowers2000two}. For size Ramsey numbers, a result attributed to Chv\'{a}tal shows that $\hat{r}(m,n)=\binom{r(m,n)}{2}$. While for online Ramsey numbers, Conlon \cite{Conlon2010line} showed that for infinitely many $n$,  $\tilde{r}(n,n)\le 1.001^{-n}\hat{r}(n,n)$, which is an exponential improvement for infinite numbers. For the lower bound, Beck \cite{Beck1993Achievement} described an elegant proof of $\tilde{r}(n,n)\ge r(n,n)/2$ which was found by Alon. Recently Conlon, Fox, Grinshpun, and He \cite{Conlon2019Online} proved that $\tilde{r}(n,n)\ge 2^{(2-\sqrt{2})n+O(1)}$, 
which is an exponential improvement to the lower bound. The basic conjecture, attributed by Kurek and Ruci\'{n}ski \cite{Kurek2005Two}, is to show that $\tilde{r}(n,n)=o(\hat{r}(n,n))$. This conjecture is at present far from being solved. For the exact values, only two nontrivial ones were obtained: $\tilde{r}(3,3)=8$ by Kurek and Ruci\'{n}ski \cite{Kurek2005Two}, and $\tilde{r}(3,4)=17$ by Pra\l{}at \cite{Pralat2008$R3417$}.

For sparse graphs, the online Ramsey numbers involving paths, stars, trees, and cycles have been studied \cite{Adamski2021Online,Adamski2022Online,Cyman2014note,Cyman2015line,Dybizbanski2020line,Dzido2021Note,Grytczuk2008line,Latip2021Note,Pralat2008note,Pralat2012note,Song2022Online}. In general, however, exact results are rare. Most results are upper or lower bounds, between which there is always a challenging gap. We are interested in exploring the following problem.
\begin{qu}
	Determine the exact expressions of some online Ramsey functions $f(n)=\tilde{r}(G,H_n)$, where $G$ is a small fixed graph, and $H_n$ is a graph from a class of sparse graphs such as paths, stars, and cycles.
\end{qu}
Let $P_n$ and $C_n$ denote a path of order $n$ and a cycle of order $n$, respectively. In 2015, Cyman, Dzido, Lapinskas, and Lo \cite{Cyman2015line} obtained the exact online Ramsey numbers of $P_3$ versus all paths and all cycles, which are $\tilde{r}(P_3,P_n)=\lceil 5(n-1)/4 \rceil$ for $n\ge 3$ and $\tilde{r}(P_3,C_n)=\lceil 5n/4 \rceil$ for $n\ge 5$. In the same paper they showed that $2n-2\le \tilde{r}(C_4,P_n)\le 4n-8$ for $n\ge 5$. Dybizba\'{n}ski, Dzido, and Zakrzewska \cite{Dybizbanski2020line} improved the upper bound of $\tilde{r}(C_4,P_n)$ from $4n-8$ to $3n-5$. Finally, Adamski and Bednarska-Bzd\c{e}ga \cite{Adamski2022Online} closed the gap by proving that $\tilde{r}(C_4,P_n)=2n-2$ for $n\ge 8$. Their proof involves a technical inductive argument, in which the base case is computer-assisted. In 2022, Song and Zhang \cite{Song2022Online} showed that $\tilde{r}(P_3,nK_2)=\lceil 3n/2 \rceil$ for $n\ge 2$; $\tilde{r}(P_4,nK_2)=\lceil 9n/5 \rceil$ for $n\ge 2$; $\tilde{r}(mK_2, P_n)=n+2m-4$ for $m=2,3$ and $n\ge 5$. Here, $nK_2$ denotes a matching with $n$ edges.

In 2021, Latip and Tan \cite{Latip2021Note} studied the function $\tilde{r}(K_{1,3}, P_\ell)$ and obtained that $\cei{3(\ell-1)/2}\le \tilde{r}(K_{1,3}, P_\ell)\le 5\ell/3+2$. Here we use the clearer notation $K_{1,3}$ rather than $S_3$ to denote a star with three edges, because $S_n$ may denote a star with $n$ vertices or a star with $n$ edges in different literatures. Latip and Tan believed that the lower bound is close to the exact expression and posed the following conjecture.

\begin{conjecture}\cite{Latip2021Note}
	$\tilde{r}(K_{1,3}, P_{\ell})=(3/2+o(1))\ell$.
\end{conjecture}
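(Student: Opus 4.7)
The plan is to prove both bounds $\tilde{r}(K_{1,3}, P_\ell) \le \lfloor 3\ell/2 \rfloor$ and $\tilde{r}(K_{1,3}, P_\ell) \ge \lfloor 3\ell/2 \rfloor$ separately, and conclude that they coincide.

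For the upper bound, I would construct an explicit Builder strategy that grows a blue path at an amortized cost of $3/2$ edges per vertex. The key observation is that if Painter avoids a red $K_{1,3}$, then no vertex can have red degree exceeding $2$; call such a vertex \emph{saturated}, and note that any further edge incident to a saturated vertex must be colored blue. Builder begins by drawing $3$ edges at a common vertex $v_0$, which either immediately creates a red $K_{1,3}$ or yields a blue $P_2$, a saturated $v_0$, and two off-path ``reserve'' vertices of red degree $1$. Builder then iteratively extends the blue path by alternating \emph{cheap} extensions (one edge at a saturated endpoint, forced blue) and \emph{expensive} extensions (two edges that first attempt extension to a reserve vertex or fresh vertex: if Painter plays blue, the path extends; if red, the resulting saturation forces a subsequent blue edge that extends the path). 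Each cheap--expensive pair costs $3$ edges for $2$ new path vertices, so by induction the total cost is $3 + 3\lfloor (\ell-2)/2 \rfloor + \epsilon_\ell = \lfloor 3\ell/2 \rfloor$, where $\epsilon_\ell \in \{0,1\}$ is a parity correction.

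For the lower bound, I would analyze a refined greedy Painter strategy: color each edge red whenever doing so does not create a red $K_{1,3}$, and blue otherwise. With this rule Painter is forced to play blue only at edges incident to an already-saturated vertex. An amortized counting argument then charges every forced blue edge to the two previously placed red edges that saturated its endpoint, yielding the total-to-blue edge ratio $3:2$. A more careful treatment of boundary cases (the very first saturation, and rounds where the blue subgraph is about to extend to length $\ell-1$) supplies the extra round over Latip and Tan's bound of $\lceil 3(\ell-1)/2 \rceil$.

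The main obstacle will be maintaining Builder's invariant across each cheap--expensive pair: after an expensive extension, Builder needs to leave either a saturated endpoint (enabling the next cheap extension) or an accessible reserve vertex of red degree $1$ (enabling the next expensive one). Painter can try to sabotage this by choosing the branch of each expensive extension that ends in the poorest state for Builder. Verifying the strategy thus requires a careful case analysis within each expensive extension, showing that every Painter response leaves at least one of these structural features intact. I expect the cleanest way to organize this is through a small number of explicit ``state types'' for the blue path together with nearby red edges, and a transition table showing that Builder's moves take each type to an allowed successor at the claimed edge cost.
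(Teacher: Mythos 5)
Your proposal is a plan rather than a proof, and both halves have problems worth naming precisely.

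For the lower bound, the purely greedy Painter you describe (red unless a red $K_{1,3}$ would appear) provably does \emph{not} yield $\lfloor 3\ell/2\rfloor$, and no treatment of boundary cases can rescue it. Concretely, against that Painter, Builder first draws a triangle $x_1x_2x_3$; all three edges are colored red (no red $K_{1,3}$ is threatened), and now every $x_i$ has red degree two. Builder then draws $u_1x_1, x_1u_2, u_2x_2, x_2u_3, u_3x_3, x_3u_4$, each forced blue, producing a blue $P_7$ in $9<\lfloor 21/2\rfloor=10$ rounds; scaling this with $(\ell-1)/2$ triangle vertices gives roughly $3(\ell-1)/2$ rounds in general. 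The point is that a red cycle saturates one vertex per red edge, beating the red-path ratio your charging argument implicitly assumes. This is exactly why the paper's Painter additionally refuses to create any red cycle of length at most $\lfloor\ell/2\rfloor$, after which the red graph is a disjoint union of paths and the count $3|X|+2s\le\lfloor 3\ell/2\rfloor-1$ versus $2|X|+s\ge\ell-1$ closes the gap. (For the asymptotic conjecture as literally stated, the weaker bound $\lceil 3(\ell-1)/2\rceil$ of Latip and Tan already suffices, so this flaw only matters for the exact value you claim.)

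For the upper bound --- which is the genuinely new content needed to settle the conjecture --- your amortized ``cheap/expensive'' scheme aims at the right ratio, but the invariant maintenance that you yourself flag as the main obstacle is the entire proof, and it is not carried out. The delicate case is when Painter answers both probe edges of an expensive step red: the step then costs three edges for one new path vertex, and the argument must bank the two newly saturated off-path vertices and guarantee they can later be spliced into the blue path as internal vertices (each absorbing two forced blue edges), on top of handling the initial configuration and the parity correction. The paper avoids this open-ended bookkeeping by inducting on $\ell$: depending on the colors of the first two or three edges, Builder constructs one of a few explicit small gadgets $H$, recursively forces a blue $P_{\ell-k}$ vertex-disjoint from $H$ in $\lfloor 3(\ell-k)/2\rfloor$ rounds (for suitable $4\le k\le 7$, plus a separate ``long red path with pendant blue edges'' case), and then joins $H$ to the ends of $P_{\ell-k}$ with a constant number of forced edges; the base cases $2\le\ell\le 6$ are done by hand. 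Until your transition table is written down and verified, the upper bound remains unproved.
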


In this paper, we determine the exact value of $\tilde{r}(K_{1,3}, P_{\ell})$ for all $\ell\ge 2$, which verifies the above conjecture in a stronger form.

\begin{thm}
	\label{thm:mainresult}
	$\tilde{r}(K_{1,3}, P_{\ell})=\flo{3\ell/2}$ for $\ell\ge 2$.
\end{thm}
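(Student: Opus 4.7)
The plan is to prove the theorem by establishing both the upper bound $\tilde{r}(K_{1,3},P_\ell)\le\lfloor 3\ell/2\rfloor$ and the matching lower bound $\tilde{r}(K_{1,3},P_\ell)\ge\lfloor 3\ell/2\rfloor$. Since the Latip--Tan lower bound $\lceil 3(\ell-1)/2\rceil$ differs from the target by exactly one for every $\ell$, the principal novelty lies in the upper bound, where the ratio must be improved from roughly $5/3$ down to $3/2$.

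For the upper bound I would design a Builder strategy that incrementally grows a blue path $P$, maintaining the invariant that one endpoint $v$ of $P$ is \emph{clean}, that is, has red degree $0$. The strategy proceeds in phases of at most three moves. In the favorable branches of each phase, three moves add two new vertices to $P$ and restore the clean-endpoint invariant, yielding the desired $3/2$ ratio. In the unfavorable branches, only one vertex is added, but some vertex acquires red degree $2$ and becomes \emph{saturated}; since any further edge incident to a saturated vertex must be coloured blue (otherwise Painter would create a red $K_{1,3}$), Builder reuses saturated vertices in subsequent phases as forced-blue ``free'' extensions that make up for the lost progress. Concretely, starting from a clean endpoint $v$, Builder plays a probe edge $va$ and then, depending on Painter's reply, continues with one or two of $ab, ac, va', va''$; the analysis must cover every leaf of the resulting finite case tree.

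The hardest aspect of the upper bound will be tuning the amortization precisely: every saturated vertex created in a bad phase must be consumed at the right moment to recover the lost vertex, and the blue path must be routed through saturated vertices without branching. Small values of $\ell$ (for instance $\ell\le 5$) will likely be checked directly to anchor the induction, while the general case will follow by induction on $\ell$ supported by a potential function tracking how many saturated vertices are currently available for reuse.

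For the lower bound I would describe an explicit Painter strategy surviving $\lfloor 3\ell/2\rfloor-1$ rounds. The natural rule is: colour an edge red unless doing so creates a red $K_{1,3}$ at an endpoint or completes a blue obstruction that is too hard to repair; otherwise colour it blue while ensuring that every blue component stays a path on fewer than $\ell$ vertices. To gain exactly one extra round over the Latip--Tan bound, I expect the strategy to be refined at either the opening or the closing move through a careful parity-dependent local decision. A counting argument combining $\Delta(\mathrm{red})\le 2$ with the absence of a blue $P_\ell$ then yields the desired $\lfloor 3\ell/2\rfloor-1$ rounds of survival, completing the proof.
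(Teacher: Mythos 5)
Your proposal is a plan rather than a proof, and both halves have genuine gaps at exactly the points where the content of the argument lives. On the lower bound: the default rule ``colour red unless it creates a red $K_{1,3}$'' is precisely what yields the Latip--Tan bound $\lceil 3(\ell-1)/2\rceil$, and the extra round is \emph{not} gained by a parity-dependent tweak at the opening or closing move. The missing idea is that Painter must additionally refuse to create any \emph{short} red cycle $C_k$ with $3\le k\le\lfloor\ell/2\rfloor$ (while still accepting long ones). Without this, Builder can close a small red cycle, every vertex of which has red degree two, and thereby force a dense supply of blue edges more efficiently than a red path allows. The paper's count then splits on whether the red graph has at least $\lfloor\ell/2\rfloor+1$ edges; if not, it is a disjoint union of $s$ red paths with $|X|$ internal vertices, every blue edge is forced either by a vertex of $X$ or by a path-closing chord, and the two inequalities $3|X|+2s\le\lfloor 3\ell/2\rfloor-1$ and $2|X|+s\ge\ell-1$ are incompatible unless $s=1$ and $\ell$ is even, a residual case killed by the cycle rule. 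None of this structure is visible in your sketch, and the sentence about colouring an edge blue ``while ensuring that every blue component stays a path'' is not a coherent rule for a Painter who is trying to avoid a blue $P_\ell$.

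On the upper bound, your amortized clean-endpoint strategy is a genuinely different architecture from the paper's (which builds a small coloured gadget in the first three to five moves, forces a disjoint blue $P_{\ell-k}$ by induction on $\ell$, and splices the two together, with $2\le\ell\le 6$ checked by hand). But the proposal does not carry out the case tree, and the place you correctly identify as ``the hardest aspect'' is exactly where it is unclear the scheme closes. In the worst branch a phase spends three edges to gain one path vertex, so you must bank a reusable credit worth half a vertex per bad phase; a saturated vertex can only be spliced into the blue path via two forced-blue edges to specific positions, and arranging that this never conflicts with the path already built (and never leaves an unconsumed credit at the end, which would cost you the additive constant and give only $\lfloor 3\ell/2\rfloor+O(1)$) is the entire difficulty. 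The paper sidesteps this in its hardest case by a global construction: it grows a red path $v_1v_2\cdots v_t$ with a pendant blue edge $u_iv_i$ at each internal vertex and then zig-zags $u_2v_2u_3v_3\cdots$ through forced-blue edges $u_iv_{i+1}$ (or $v_iu_{i+1}$), which realises the $3/2$ ratio exactly without any endpoint-local bookkeeping. As written, your proposal identifies the right target ratio and a plausible mechanism, but neither bound is actually established.
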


The remainder of this paper is organized as follows. Section \ref{sec2} shows the lower bound of $\tilde{r}(K_{1,3}, P_{\ell})$. We will prove the upper bound by induction for $\ell\ge 7$. Thus we divide the proof of the upper bound into two sections: Section \ref{sec3} for $2\le \ell\le 6$ and Section \ref{sec4} for $\ell\ge 7$.

%%%%%%%%%%%%%%%%%%%%%%%%%%%%%%%%%%%%%%%%%%
\section{The lower bound}\label{sec2}

During the whole game Painter uses a blocking strategy: she always colors an edge red unless doing so would create either a red $K_{1,3}$ or a red cycle $C_k$ for $3\le k\le \flo{\ell/2}$. More specifically, let $R_i$ be the graph induced by all red edges before the $i$th round, and let $e_i$ be the edge chosen by Builder in his $i$th move. If $R_i+e_i$ contains a $K_{1,3}$ or a cycle whose length is at most $\flo{\ell/2}$, then Painter colors $e_i$ blue. Otherwise she colors $e_i$ red.

Let $G$, $R$, and $B$ be the graphs induced by all edges, all red edges, and all blue edges before the $\flo{3\ell/2}$th round, respectively. We will prove that $G$ contains neither a red $K_{1,3}$ nor a blue $P_\ell$. Hence Builder can not win the game in $\flo{3\ell/2}-1$ rounds, and the lower bound follows.

If $R$ has at least $\flo{\ell/2}+1$ edges, then $B$ has at most  $\flo{3\ell/2}-1-(\flo{\ell/2}+1)$ edges, which is $\ell-2$. In this way, $B$ can not contain a path $P_\ell$. So we assume that $R$ contains no cycle with length at least $\flo{\ell/2}+1$. Combining it with the Painter's strategy, we see that $R$ contains no cycle, and has maximum degree at most two. Consequently, $R$ consists of a disjoint union of paths. Assume that the number of components in $R$ is $s$. Let $X$ be the set of vertices which have degree two, and $\{y_{2i-1}, y_{2i}\}$ the ends of the $i$th path for $1\le i\le s$. Thus, $R$ has $|X|+s$ edges, which is at most $\flo{\ell/2}$.

Let $P$ be a longest blue path in $G$. Since each vertex is incident to at most two edges of a path, $P$ contains at most $2|X|$ edges incident to $X$. Recall that each blue edge is forced to appear, to avoid either a red $K_{1,3}$ or a red cycle. In other words, each blue edge is either incident to a vertex of $X$, or $y_{2i-1}y_{2i}$ for some $i$ with $1\le i\le s$. Therefore, the total number of edges in $P$ is at most $2|X|+s$. Since $|E(R)|+|E(P)|\le |E(G)|$, we have $$3|X|+2s\le \flo{3\ell/2}-1.$$ Suppose to the contrary that there is a blue path of order $\ell$, then
\begin{equation}\label{equ1}
	2|X|+s\ge \ell-1.
\end{equation}

If $s\ge 2$, then we have
\begin{align*}
	\flo{3\ell/2}-1&\ge 3|X|+2s\\
	&\ge 3(2|X|+s)/2+1\\
	&\ge 3(\ell-1)/2+1\\
	&=(3\ell-1)/2,
\end{align*}
which is a contradiction. So we have $s=1$. The inequality (\ref{equ1}) implies that $2|X|\ge \ell-2$. It follows that $|X|+s\ge \ell/2$. Since $R$ has $|X|+s$ edges, we see that $\ell/2\le |X|+s\le \flo{\ell/2}$. Thus, $\ell$ is even, and $R$ is a red path with $\ell/2$ edges (and $\ell/2+1$ vertices). If Builder joins the two end vertices of this path, he creates a cycle of length $\ell/2+1$. Painter will color this edge red by her strategy. Thus, the blue edges can only be forced by the vertices with degree two in $R$. Hence $G$ contains only $\ell-2$ blue edges, which contradicts our assumption that there is a blue $P_\ell$. Thus we have the lower bound.

%%%%%%%%%%%%%%%%%%%%%%%%%%%%%%%%%%%%%%%%%%
\section{The upper bound for $2\le \ell\le 6$}\label{sec3}
We start with the following simple lemma.

\begin{lemma}\label{simplelemma}
	If there is a blue $P_k$ in the online Ramsey game, then in the next three rounds, there is either a blue $P_{k+1}$ or a red $K_{1,3}$.
\end{lemma}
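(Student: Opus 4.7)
The plan is to use a very simple Builder strategy that repeatedly probes one endpoint of the given blue path with fresh vertices. If Painter ever yields blue, the path extends; if Painter resists all three probes with red, the probed endpoint acquires three red edges and the Builder has forced a red $K_{1,3}$.

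In more detail, suppose the blue $P_k$ in the current game has vertex sequence $v_1 v_2 \cdots v_k$. For $i=1,2,3$, in round $i$ Builder selects a vertex $u_i$ that has so far been untouched in the game (this is possible because the underlying vertex set is infinite) and plays the edge $v_k u_i$. Since $u_i$ is new, in particular $u_i \notin \{v_1,\ldots,v_k\}$ and the edge $v_k u_i$ has not appeared before, so this is a legal Builder move. If Painter colors $v_k u_i$ blue for some $i$, then appending $u_i$ to the blue $P_k$ yields the blue path $v_1 v_2 \cdots v_k u_i$, which is a $P_{k+1}$, and we are done after at most $i\le 3$ rounds.

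Otherwise all three edges $v_k u_1$, $v_k u_2$, $v_k u_3$ have been colored red. Because $u_1,u_2,u_3$ are distinct, these three red edges are incident to the common vertex $v_k$ and have three different other endpoints, so they together form a red $K_{1,3}$ centered at $v_k$. Hence within three rounds the game contains either a blue $P_{k+1}$ or a red $K_{1,3}$, proving the lemma. There is no real obstacle here; the only thing to verify carefully is that Builder's moves remain legal (i.e., between nonadjacent vertices) when the probed endpoint $v_k$ may already carry one or two red edges from earlier play, which is handled by always choosing a fresh vertex $u_i$.
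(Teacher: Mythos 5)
Your proof is correct and follows exactly the paper's argument: join an endpoint of the blue $P_k$ to three fresh vertices, so that either one edge is blue (extending the path to $P_{k+1}$) or all three are red (forming a red $K_{1,3}$ centered at that endpoint). The only difference is that you spell out the legality of the moves, which the paper leaves implicit.
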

\begin{proof}
	Let $v$ be an end vertex of the blue $P_k$. Builder joins $v$ to three new vertices in the next three rounds. If none of the three edges is blue, we have a red $K_{1,3}$. Otherwise, we have a blue $P_{k+1}$.
\end{proof}

For $\ell=2,3$, Builder draws a star with $\ell+1$ edges. Then there is either a red $K_{1,3}$ or a blue $P_\ell$. Hence $\tilde{r}(K_{1,3}, P_\ell)\le \ell+1= \flo{3\ell/2}$ for $\ell=2,3$. For $\ell=4$, Builder joins $v_0$ to $v_1,v_2,\ldots$ one by one, until a blue $K_{1,2}$ appears. This blue $K_{1,2}$ can be obtained in at most four rounds, since otherwise a red $K_{1,3}$ shows up and our proof is done. If the blue $K_{1,2}$ is obtained in three rounds, by Lemma \ref{simplelemma}, our proof is done. If the blue $K_{1,2}$ shows up in the fourth round, we assume that $v_0v_1, v_0v_2$ are red, and $v_0v_3, v_0v_4$ are blue. Builder chooses $v_1v_3$ and $v_1v_4$ in the next two moves. If neither of them is blue, then there is a red $K_{1,3}$. Otherwise, there is a blue $P_4$. Thus $\tilde{r}(K_{1,3}, P_4)\le 6$.

For $5\le \ell\le 6$, Builder first draws a path $P_4$, denoted by $v_1v_2v_3v_4$. It has five color patterns up to symmetry: $bbb$, $bbr$, $brb$, $brr$, $rrr$. In fact, the path may have another color pattern $rbr$, which is avoided as follows. If the first two edges $v_1v_2$ and $v_2v_3$ are colored blue and red respectively, then Builder joins $v_4$ to $v_3$. On the other hand, if $v_1v_2$ is red and $v_2v_3$ is blue, then Builder joins $v_4$ to $v_1$. Thus $rbr$ can not show up in the first three rounds.

We assume that there is no red $K_{1,3}$ in $\flo{3\ell/2}$ rounds, since otherwise our proof is done. If $v_1v_2v_3v_4$ has color pattern $bbb$, by Lemma \ref{simplelemma}, $\tilde{r}(K_{1,3}, P_\ell)\le \flo{3\ell/2}$ for $\ell=5,6$. If it has color pattern $brb$, Builder joins a new vertex $v_5$ to $v_2$ and $v_3$ in the next two moves. If both $v_2v_5$ and $v_3v_5$ are blue, then $v_1v_2v_5v_3v_4$ is a blue $P_5$. By Lemma \ref{simplelemma}, Builder forces a blue $P_6$ in the next three rounds. If both $v_2v_5$ and $v_3v_5$ are red, then for $\ell=5$, Builder chooses two edges $v_1v_5$ and $v_4v_5$, both of which are forced to be blue. Hence $v_2v_1v_5v_4v_3$ is a blue $P_5$. For $\ell=6$, Builder chooses three edges $v_2v_6$, $v_5v_6$, and $v_4v_5$, all of which are forced to be blue. Hence $v_1v_2v_6v_5v_4v_3$ is a blue $P_6$. If $v_2v_5$ and $v_3v_5$ have different colors, say, $v_2v_5$ is red and $v_3v_5$ is blue, then Builder joins $v_2$ to $v_4$ and hence $v_1v_2v_4v_3v_5$ is a blue $P_5$. By Lemma \ref{simplelemma}, Builder forces a blue $P_6$ in the next three rounds.

If $v_1v_2v_3v_4$ has color pattern $bbr$, Builder joins $v_3$ to a new vertex $v_5$. If $v_3v_5$ is blue, then Builder chooses $v_1v_4$ and $v_4v_5$ in the next two moves. At least one of the two edges is blue. Thus, we have a blue $P_5$ in six rounds. By Lemma \ref{simplelemma}, Builder forces a blue $P_6$ in the next three rounds. If $v_3v_5$ is red, then Builder joins $v_4$ to two new vertices $v_6$ and $v_7$. At least one of the two new edges is blue. If exactly one of them is blue, say, $v_4v_6$ is blue and $v_4v_7$ is red, then Builder chooses $v_3v_6$ for $\ell=5$, and then joins $v_4$ to a new vertex $v_8$ for $\ell=6$. As a result, $v_1v_2v_3v_6v_4$ is a blue $P_5$ and $v_1v_2v_3v_6v_4v_8$ is a blue $P_6$. If both $v_4v_6$ and $v_4v_7$ are blue, then Builder chooses $v_3v_6$, which has to be blue. Hence we obtain a blue $P_6$ (and also a blue $P_5$) in seven rounds, which is $v_1v_2v_3v_6v_4v_7$.

\begin{figure}[htp]
	\centering
	\begin{tikzpicture}[global scale=1.2]
		\begin{scope}[line width=1pt, blue]
			\draw (1,1) to (2,1);
			\draw (1,1) to [bend left=45](3,1);
			\draw (2,1) to [bend right=45](4,1);
			\draw (3.5,2) to (3,1);
		\end{scope}      	  		
		
		\begin{scope}[line width=1pt, red, dashed]
			\draw (2,1) to (3,1);
			\draw (3,1) to (4,1);
		\end{scope}
		\fill (3.5,2) circle (3pt);
		\fill (1,1) circle (3pt);
		\fill (2,1) circle (3pt);
		\fill (3,1) circle (3pt);
		\fill (4,1) circle (3pt);
		\node at (3,0.3) {\textcircled{\small{4}}};
		\node at (2,1.6) {5};
		\node at (3.1,1.6) {6};
	\end{tikzpicture}
	\hspace{20pt}
	\begin{tikzpicture}[global scale=1.2]
		\begin{scope}[line width=1pt, blue]
			\draw (1,1) to (2,1);
			\draw (2,1) to (2.5,2);
			\draw (3,1) to (2.5,2);
			\draw (3.5,2) to (3,1);
			\draw (3.5,2) to (4,1);
		\end{scope}      	  		
		
		\begin{scope}[line width=1pt, red, dashed]
			\draw (2,1) to (3,1);
			\draw (3,1) to (4,1);
			\draw (2,1) to [bend right=45](4,1);
		\end{scope}
		\fill (2.5,2) circle (3pt);
		\fill (3.5,2) circle (3pt);
		\fill (1,1) circle (3pt);
		\fill (2,1) circle (3pt);
		\fill (3,1) circle (3pt);
		\fill (4,1) circle (3pt);
		\node at (3,0.3) {\textcircled{\small{4}}};
		\node at (2.1,1.6) {5};
		\node at (2.9,1.6) {6};
		\node at (3.15,1.6) {7};
		\node at (3.9,1.6) {8};
	\end{tikzpicture}
	\hspace{20pt}
	\begin{tikzpicture}[global scale=1.2]
		\begin{scope}[line width=1pt, blue]
			\draw (2,1) to (1.5,2);
			\draw (2,1) to (2.5,2);
			\draw (3,1) to (2.5,2);
			\draw (3.5,2) to (3,1);
		\end{scope}
		\begin{scope}[line width=1pt, dotted]
			\draw (1.5,2) to (4,1);
			\draw (3.5,2) to (4,1);
		\end{scope}    	  		
		
		\begin{scope}[line width=1pt, red, dashed]
			\draw (1,1) to (2,1);
			\draw (2,1) to (3,1);
			\draw (3,1) to (4,1);
		\end{scope}
		\fill (1.5,2) circle (3pt);
		\fill (2.5,2) circle (3pt);
		\fill (3.5,2) circle (3pt);
		\fill (1,1) circle (3pt);
		\fill (2,1) circle (3pt);
		\fill (3,1) circle (3pt);
		\fill (4,1) circle (3pt);
		\node at (3,0.15) {};
		\node at (1.85,1.6) {4};
		\node at (2.1,1.6) {5};
		\node at (2.9,1.6) {6};
		\node at (3.15,1.6) {7};
	\end{tikzpicture}
	\caption{The color patterns $brr$ and $rrr$ of the first three edges.}
	\label{5and6}
\end{figure}
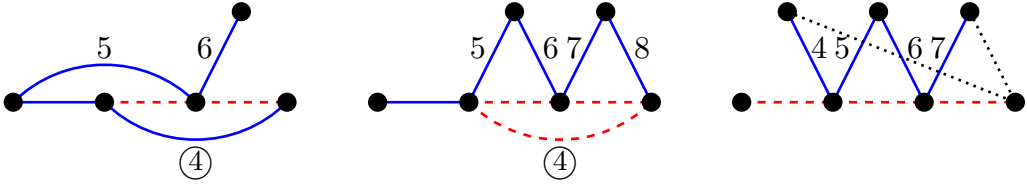

For the other two color patterns $brr$ and $rrr$, we illustrate Builder's strategy as in Figure \ref{5and6}. In the pattern $brr$, we use a circled number to denote that Painter has a choice in that move. Since there is a blue $P_5$ in six rounds in the left graph, it follows that a blue $P_6$ can be forced in nine rounds. In the pattern $rrr$, at least one of the black dotted edges is blue. To summarize, Builder can always force a blue $P_5$ in seven rounds and a blue $P_6$ in nine rounds.

%%%%%%%%%%%%%%%%%%%%%%%%%%%%%%%%%%%%%%%%%%
\section{The upper bound for $\ell \ge 7$}\label{sec4}

In this section we show that $\tilde{r}(K_{1,3}, P_\ell)\le \flo{3\ell/2}$ for $\ell\ge 7$. Assume that Painter always avoids a red $K_{1,3}$. When an edge is `forced' to be blue in the following, it means that there is a red $K_{1,3}$ if the edge is colored red. We shall find a blue $P_\ell$ in $\flo{3\ell/2}$ rounds. In the first two moves Builder draws a star $K_{1,2}$, which has three possible color patterns up to symmetry: $bb$, $br$, $rr$. For the last two patterns, Builder then joins the center of $K_{1,2}$ to a new vertex. Thus, there are three cases in total: the first two rounds form a blue path of order three; the first three rounds form a star with two edges blue and one edge red; the first three rounds form a star with two edges red and one edge blue. In most cases, the Builder's strategy is as follows. First he creates a small graph $H$. Then for some integer $k$ with $4\le k\le \ell-2$, Builder forces a blue path $P_{\ell-k}$ that is vertex-disjoint with $H$ by induction. Finally, he combines $H$ and $P_{\ell-k}$ to a blue path $P_\ell$.

\begin{figure}[htp]
	\centering
	\begin{tikzpicture}
		\begin{scope}[line width=1pt, blue]
			\draw (1,0.5) to (2,0.5);
			\draw (2,0.5) to (3,0.5);
		\end{scope}      	  		
		
		\fill (1,0.5) circle (3pt);
		\fill (2,0.5) circle (3pt);
		\fill (3,0.5) circle (3pt);
		
		\node at (2,-0.5) {Case 1};
	\end{tikzpicture}
	\hspace{40pt}
	\begin{tikzpicture}
		\begin{scope}[line width=1pt, blue]
			\draw (2,1) to (3,1);
			\draw (1,1) to (2,1);
		\end{scope}
		
		\begin{scope}[line width=1pt, red, dashed]
			\draw (2,1) to (2,0);
		\end{scope}
		
		\fill (1,1) circle (3pt);
		\fill (2,1) circle (3pt);
		\fill (2,0) circle (3pt);
		\fill (3,1) circle (3pt);
		\node at (2,-0.5) {Case 2};
		
	\end{tikzpicture}
	\hspace{40pt}
	\begin{tikzpicture}
		\begin{scope}[line width=1pt, blue]
			\draw (2,1) to (2,0);
		\end{scope}
		
		\begin{scope}[line width=1pt, red, dashed]
			\draw (1,1) to (2,1);
			\draw (2,1) to (3,1);
		\end{scope}
		
		\fill (1,1) circle (3pt);
		\fill (2,1) circle (3pt);
		\fill (2,0) circle (3pt);
		\fill (3,1) circle (3pt);
		\node at (2,-0.5) {Case 3};
	\end{tikzpicture}
	\caption{We distinguish three cases.}
	\label{fig:cases}
\end{figure}
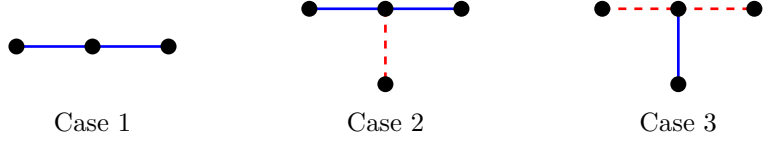

\textbf{Case 1.} The first two rounds form a blue path of order three.

Assume that the blue path is $v_1v_2v_3$. Builder extends it to a longer path $v_1v_2v_3v_4v_5$ in the next two moves. We distinguish three subcases by the colors of $v_3v_4$ and $v_4v_5$.

\textbf{Subcase 1.1.} The edge $v_4v_5$ is red.

\begin{figure}[htp]
	\centering
	\begin{tikzpicture}[global scale=0.98]
		\begin{scope}[line width=1pt, blue]
			\draw (1.5,3) to [bend left] (4.5,3);
			\draw (1,1) to (2,1);%v_1 v_2
			\draw (2,1) to (3,1);%v_2 v_3
			\draw (1,1) to [bend left=45](4,1);%v_1 v_4
			\draw (3,1) to [bend right=60](5,1);%v_3 v_5
			\draw (4.5,3) to (4,1);%y v_4
		\end{scope}      	  		
		
		\begin{scope}[line width=1pt, red, dashed]
			\draw (3,1) to (4,1);
			\draw (4,1) to (5,1);
		\end{scope}
		\fill (1.5,3) circle (3pt);%x
		\fill (4.5,3) circle (3pt);%y
		\fill (1,1) circle (3pt);%v_1
		\fill (2,1) circle (3pt);%v_2
		\fill (3,1) circle (3pt);%v_3
		\fill (4,1) circle (3pt);%v_4
		\fill (5,1) circle (3pt);%v_5
		\node at (1.5,2.7) {$x$};
		\node at (4.6,2.7) {$y$};
		\node at (1,0.7) {$v_1$};
		\node at (2,0.7) {$v_2$};
		\node at (3,0.7) {$v_3$};
		\node at (4,0.7) {$v_4$};
		\node at (5.1,0.7) {$v_5$};
		\node at (3,3.65) {$P_{\ell-5}$};
		
	\end{tikzpicture}
	\begin{tikzpicture}[global scale=0.98]
		\begin{scope}[line width=1pt, blue]
			\draw (1.5,3) to [bend left] (4.5,3);
			\draw (1,1) to (2,1);%v_1 v_2
			\draw (2,1) to (3,1);%v_2 v_3
			\draw (1,1) to [bend left=45](4,1);%v_1 v_4
			\draw (4,1) to (4.5,1.5);%v_4 v_6
			\draw (5,1) to (4.5,1.5);%v_5 v_6
			\draw (4.5,3) to (5,1);%y v_5
			
		\end{scope}      	  		
		
		\begin{scope}[line width=1pt, red, dashed]
			\draw (3,1) to (4,1);
			\draw (4,1) to (5,1);
			\draw (3,1) to [bend right=60](5,1);%v_3 v_5
		\end{scope}
		\fill (1.5,3) circle (3pt);%x
		\fill (4.5,3) circle (3pt);%y
		\fill (1,1) circle (3pt);%v_1
		\fill (2,1) circle (3pt);%v_2
		\fill (3,1) circle (3pt);%v_3
		\fill (4,1) circle (3pt);%v_4
		\fill (5,1) circle (3pt);%v_5
		\fill (4.5,1.5) circle (3pt);%v_6
		\node at (1.5,2.7) {$x$};
		\node at (4.75,2.75) {$y$};
		\node at (1,0.7) {$v_1$};
		\node at (2,0.7) {$v_2$};
		\node at (3,0.7) {$v_3$};
		\node at (4,0.7) {$v_4$};
		\node at (5.1,0.7) {$v_5$};
		\node at (4.5,1.8) {$v_6$};
		\node at (3,3.65) {$P_{\ell-6}$};
	\end{tikzpicture}
	\begin{tikzpicture}[global scale=0.98]
		\begin{scope}[line width=1pt, blue]
			\draw (1.5,3) to [bend left] (4.5,3);
			\draw (1,1) to (2,1);%v_1 v_2
			\draw (2,1) to (3,1);%v_2 v_3
			\draw (4.5,3) to (4,1);%y v_4
			\draw (3,1) to (4,1);%v_3v_4
		\end{scope}
		
		\begin{scope}[line width=1pt]
			\draw (4,1) to (1.5,3);%xv_4
			\draw (4,1) to (4.5,3);%yv_4
			
		\end{scope} 		
		
		\begin{scope}[line width=1pt, red, dashed]
			\draw (4,1) to (5,1);%v_4v_5
			%\draw (3,1) to [bend right=60](5,1);%v_3 v_5
		\end{scope}
		\begin{scope}[line width=1pt, white]
			\draw (3,1) to [bend right=60](5,1);%v_3 v_5
		\end{scope}
		\fill (1.5,3) circle (3pt);%x
		\fill (4.5,3) circle (3pt);%y
		\fill (1,1) circle (3pt);%v_1
		\fill (2,1) circle (3pt);%v_2
		\fill (3,1) circle (3pt);%v_3
		\fill (4,1) circle (3pt);%v_4
		\fill (5,1) circle (3pt);%v_5
		\node at (1.5,2.7) {$x$};
		\node at (4.6,2.7) {$y$};
		\node at (1,0.7) {$v_1$};
		\node at (2,0.7) {$v_2$};
		\node at (3,0.7) {$v_3$};
		\node at (4,0.7) {$v_4$};
		\node at (5,0.7) {$v_5$};
		\node at (3,3.65) {$P_{\ell-4}$};
	\end{tikzpicture}
	\caption{Force a $P_\ell$ in Subcase 1.1.}
\end{figure}
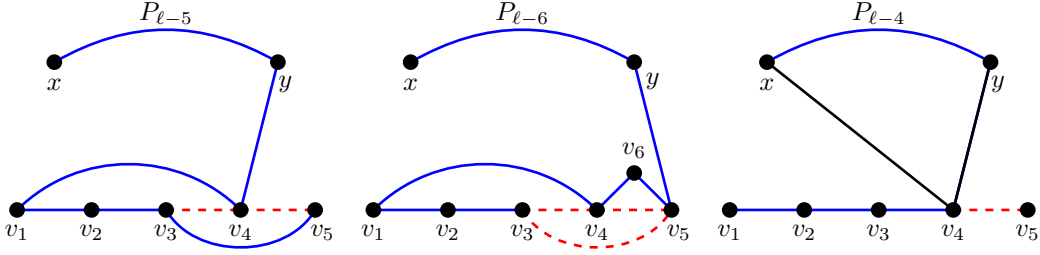

If $v_3v_4$ is red, Builder then chooses $v_3v_5$. If $v_3v_5$ is blue, Builder forces a blue $P_{\ell-5}$ in $\flo{3\ell/2}-7$ rounds by induction, whose end vertices are denoted by $x$ and $y$. Next Builder draws two edges $yv_4$ and $v_4v_1$, both of which are forced to be blue. Thus, there is a blue path $xP_{\ell-5}yv_4v_1v_2v_3v_5$ of order $\ell$ in $\flo{3\ell/2}$ rounds. If $v_3v_5$ is red and $\ell=7$, Builder draws four edges $yv_5$, $v_5v_6$, $v_6v_4$, and $v_4v_1$, all of which are blue. Here, $v_6$ and $y$ are two new vertices. Thus, $yv_5v_6v_4v_1v_2v_3$ is a blue $P_7$. If $v_3v_5$ is red and $\ell\ge 8$, Builder forces a blue $P_{\ell-6}$ with two ends $x$ and $y$ in $\flo{3\ell/2}-9$ rounds by induction. Next Builder draws four edges $yv_5$, $v_5v_6$, $v_6v_4$, and $v_4v_1$. Here, $v_6$ is a new vertex, and the four edges are forced to be blue. Thus, there is a blue path $xP_{\ell-6}yv_5v_6v_4v_1v_2v_3$ of order $\ell$ in $\flo{3\ell/2}$ rounds.

If $v_3v_4$ is blue, Builder then forces a blue $P_{\ell-4}$ with two ends $x$ and $y$ in $\flo{3\ell/2}-6$ rounds by induction. Next Builder draws two edges $xv_4$ and $yv_4$, at least one of which is blue. Without loss of generality, assume that $yv_4$ is blue. Thus, there is a blue path $xP_{\ell-4}yv_4v_3v_2v_1$ of order $\ell$ in $\flo{3\ell/2}$ rounds.

\textbf{Subcase 1.2.} The edge $v_3v_4$ is red and $v_4v_5$ is blue.

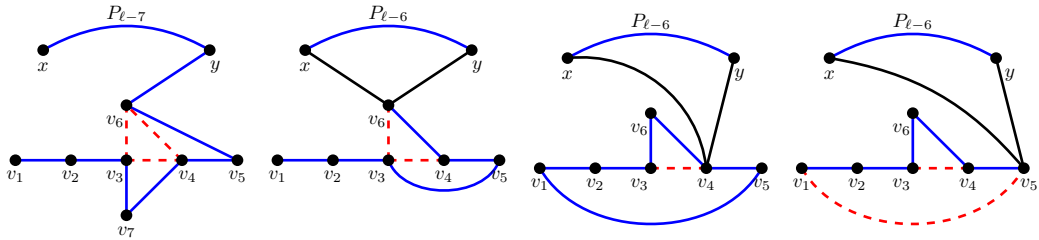
\begin{figure}[htp]
	\centering
	\begin{tikzpicture}[global scale=0.73]
		\begin{scope}[line width=1pt, blue]
			\draw (1.5,3) to [bend left] (4.5,3);
			\draw (1,1) to (2,1);%v_1 v_2
			\draw (2,1) to (3,1);%v_2 v_3
			\draw (4,1) to (5,1);%v_4 v_5
			\draw (3,1) to (3,0);%v_3 v_7
			\draw (4,1) to (3,0);%v_4 v_7
			\draw (5,1) to (3,2);%v_5 v_6
			\draw (3,2) to (4.5,3);%y v_6
		\end{scope}      	  		
		
		\begin{scope}[line width=1pt, red, dashed]
			\draw (3,1) to (4,1);%v_3 v_4
			\draw (3,1) to (3,2);%v_3 v_6
			\draw (4,1) to (3,2);%v_4 v_6
		\end{scope}
		\fill (1.5,3) circle (3pt);%x
		\fill (4.5,3) circle (3pt);%y
		\fill (1,1) circle (3pt);%v_1
		\fill (2,1) circle (3pt);%v_2
		\fill (3,1) circle (3pt);%v_3
		\fill (4,1) circle (3pt);%v_4
		\fill (5,1) circle (3pt);%v_5
		\fill (3,2) circle (3pt);%v_6
		\fill (3,0) circle (3pt);%v_7
		\node at (1.5,2.7) {$x$};
		\node at (4.6,2.7) {$y$};
		\node at (1,0.7) {$v_1$};
		\node at (2,0.7) {$v_2$};
		\node at (2.8,0.7) {$v_3$};
		\node at (4.1,0.7) {$v_4$};
		\node at (5,0.7) {$v_5$};
		\node at (2.8,1.7) {$v_6$};%v_6
		\node at (3,-0.3) {$v_7$};%v_7
		\node at (3,3.65) {$P_{\ell-7}$};
	\end{tikzpicture}
	\begin{tikzpicture}[global scale=0.73]
		\begin{scope}[line width=1pt, blue]
			\draw (1.5,3) to [bend left] (4.5,3);
			\draw (1,1) to (2,1);%v_1 v_2
			\draw (2,1) to (3,1);%v_2 v_3
			\draw (4,1) to (5,1);%v_4 v_5
			\draw (4,1) to (3,2);%v_4 v_6
			\draw (3,1) to [bend right=70] (5,1);%v_3 v_5
		\end{scope}
		\begin{scope}[line width=1pt, white]
			\node at (3,-0.3) {$v_7$};%v_7
		\end{scope}    	  		
		\begin{scope}[line width=1pt]
			\draw (3,2) to (1.5,3);%x v_6
			\draw (3,2) to (4.5,3);%y v_6
		\end{scope}
		\begin{scope}[line width=1pt, red, dashed]
			\draw (3,1) to (4,1);%v_3 v_4
			\draw (3,1) to (3,2);%v_3 v_6
		\end{scope}
		\fill (1.5,3) circle (3pt);%x
		\fill (4.5,3) circle (3pt);%y
		\fill (1,1) circle (3pt);%v_1
		\fill (2,1) circle (3pt);%v_2
		\fill (3,1) circle (3pt);%v_3
		\fill (4,1) circle (3pt);%v_4
		\fill (5,1) circle (3pt);%v_5
		\fill (3,2) circle (3pt);%v_6
		\node at (1.5,2.7) {$x$};
		\node at (4.6,2.7) {$y$};
		\node at (1,0.7) {$v_1$};
		\node at (2,0.7) {$v_2$};
		\node at (2.8,0.7) {$v_3$};
		\node at (4,0.7) {$v_4$};
		\node at (5,0.7) {$v_5$};
		\node at (2.8,1.7) {$v_6$};%v_6
		\node at (3,3.65) {$P_{\ell-6}$};
	\end{tikzpicture}
	\begin{tikzpicture}[global scale=0.73]
		\begin{scope}[line width=1pt, blue]
			\draw (1.5,3) to [bend left] (4.5,3);
			\draw (1,1) to (2,1);%v_1 v_2
			\draw (2,1) to (3,1);%v_2 v_3
			\draw (4,1) to (5,1);%v_4 v_5
			\draw (3,1) to (3,2);%v_3 v_6
			\draw (4,1) to (3,2);%v_4 v_6
			\draw (1,1) to [bend right=60] (5,1);%v_1 v_5
		\end{scope}      	  		
		\begin{scope}[line width=1pt]
			\draw (4,1) to [bend right=45](1.5,3);%x v_4
			\draw (4,1) to (4.5,3);%y v_4
		\end{scope}
		\begin{scope}[line width=1pt, red, dashed]
			\draw (3,1) to (4,1);%v_3 v_4
		\end{scope}
		\fill (1.5,3) circle (3pt);%x
		\fill (4.5,3) circle (3pt);%y
		\fill (1,1) circle (3pt);%v_1
		\fill (2,1) circle (3pt);%v_2
		\fill (3,1) circle (3pt);%v_3
		\fill (4,1) circle (3pt);%v_4
		\fill (5,1) circle (3pt);%v_5
		\fill (3,2) circle (3pt);%v_6
		\node at (1.5,2.7) {$x$};
		\node at (4.6,2.7) {$y$};
		\node at (1,0.7) {$v_1$};
		\node at (2,0.7) {$v_2$};
		\node at (2.8,0.7) {$v_3$};
		\node at (4,0.7) {$v_4$};
		\node at (5,0.7) {$v_5$};
		\node at (2.8,1.7) {$v_6$};%v_6
		\node at (3,3.65) {$P_{\ell-6}$};
	\end{tikzpicture}
	\begin{tikzpicture}[global scale=0.73]
		\begin{scope}[line width=1pt, blue]
			\draw (1.5,3) to [bend left] (4.5,3);
			\draw (1,1) to (2,1);%v_1 v_2
			\draw (2,1) to (3,1);%v_2 v_3
			\draw (4,1) to (5,1);%v_4 v_5
			\draw (3,1) to (3,2);%v_3 v_6
			\draw (4,1) to (3,2);%v_4 v_6
		\end{scope}      	  		
		\begin{scope}[line width=1pt]
			\draw (5,1) to [bend right=20](1.5,3);%x v_5
			\draw (5,1) to (4.5,3);%y v_5
		\end{scope}
		\begin{scope}[line width=1pt, red, dashed]
			\draw (3,1) to (4,1);%v_3 v_4
			\draw (1,1) to [bend right=60] (5,1);%v_1 v_5
		\end{scope}
		\fill (1.5,3) circle (3pt);%x
		\fill (4.5,3) circle (3pt);%y
		\fill (1,1) circle (3pt);%v_1
		\fill (2,1) circle (3pt);%v_2
		\fill (3,1) circle (3pt);%v_3
		\fill (4,1) circle (3pt);%v_4
		\fill (5,1) circle (3pt);%v_5
		\fill (3,2) circle (3pt);%v_6
		\node at (1.5,2.7) {$x$};
		\node at (4.75,2.7) {$y$};
		\node at (1,0.7) {$v_1$};
		\node at (2,0.7) {$v_2$};
		\node at (2.8,0.7) {$v_3$};
		\node at (4,0.7) {$v_4$};
		\node at (5.1,0.7) {$v_5$};
		\node at (2.8,1.7) {$v_6$};%v_6
		\node at (3,3.65) {$P_{\ell-6}$};
	\end{tikzpicture}
	\caption{Force a $P_\ell$ in Subcase 1.2.}
\end{figure}

Builder joins both $v_3$ and $v_4$ to a new vertex $v_6$. If $v_3v_6$ and $v_4v_6$ are red, and $\ell=7$, then $v_1v_2v_3v_7v_4v_5v_6$ is a blue $P_7$, where $v_7$ is a new vertex. If $v_3v_6$ and $v_4v_6$ are red, and $\ell=8$, then $v_1v_2v_3v_7v_4v_5v_6y$ is a blue $P_8$, where $v_7$ and $y$ are two new vertices. If $v_3v_6$ and $v_4v_6$ are red, and $\ell\ge 9$, then Builder forces a blue $P_{\ell-7}$ in $\flo{3\ell/2}-10$ rounds by induction, whose end vertices are denoted by $x$ and $y$. It follows that $v_1v_2v_3v_7v_4v_5v_6yP_{\ell-7}x$ is a blue $P_\ell$, where $v_7$ is a new vertex. Thus, we obtain a blue $P_\ell$ in the required rounds.

If $v_3v_6$ is red and $v_4v_6$ is blue, and $\ell=7$, then Builder chooses $v_3v_5$, which has to be blue. He then joins $v_6$ to two new vertices $x$ and $y$. Either $v_6x$ or $v_6y$ is blue. Thus, we have a blue $P_7$, which is $v_1v_2v_3v_5v_4v_6x$ or $v_1v_2v_3v_5v_4v_6y$. If $v_3v_6$ is red and $v_4v_6$ is blue, and $\ell\ge 8$, then Builder forces a blue $P_{\ell-6}$ with two ends $x$ and $y$ in $\flo{3\ell/2}-9$ rounds by induction. Next Builder chooses $v_3v_5$, $xv_6$, and $yv_6$. The edge $v_3v_5$ has to be blue, and at least one of $xv_6$ and $yv_6$, say $yv_6$, is blue. As a result, $v_1v_2v_3v_5v_4v_6yP_{\ell-6}x$ is a blue $P_\ell$. We obtain a blue $P_\ell$ in $\flo{3\ell/2}$ rounds again. If $v_3v_6$ is blue and $v_4v_6$ is red, applying the same argument as above, we can obtain a blue $P_\ell$.

If both $v_3v_6$ and $v_4v_6$ are blue, and $\ell=7$, we have obtained a blue $P_6$ in six rounds, which is $v_1v_2v_3v_6v_4v_5$. Thus, it is easy to obtain a blue $P_7$ in nine rounds. If both $v_3v_6$ and $v_4v_6$ are blue, and $\ell\ge 8$, then Builder forces a blue $P_{\ell-6}$ with two ends $x$ and $y$ in $\flo{3\ell/2}-9$ rounds by induction. Next Builder chooses the edge $v_1v_5$. If $v_1v_5$ is blue, Builder joins $v_4$ to $x$ and $y$. At least one of $xv_4$ and $yv_4$, say, $yv_4$ is blue. Accordingly, $v_5v_1v_2v_3v_6v_4yP_{\ell-6}x$ is a blue $P_\ell$. Hence we obtain a blue $P_\ell$ in $\flo{3\ell/2}$ rounds. If $v_1v_5$ is red, Builder then joins $v_5$ to $x$ and $y$. At least one of $xv_5$ and $yv_5$, say, $yv_5$ is blue. Accordingly, $v_1v_2v_3v_6v_4v_5yP_{\ell-6}x$ is a blue $P_\ell$. We obtain a blue $P_\ell$ in the required rounds.

\textbf{Subcase 1.3.} Both edges $v_3v_4$ and $v_4v_5$ are blue.

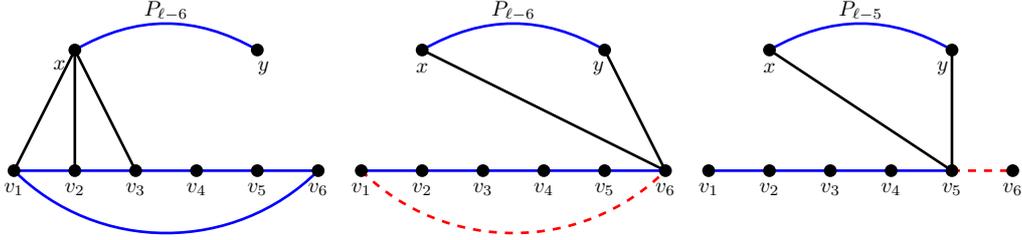
\begin{figure}[htp]
	\centering
	\begin{tikzpicture}[global scale=0.8]
		\begin{scope}[line width=1pt, blue]
			\draw (2,3) to [bend left] (5,3);
			\draw (1,1) to (2,1);%v_1 v_2
			\draw (2,1) to (3,1);%v_2 v_3
			\draw (3,1) to (4,1);%v_3 v_4
			\draw (4,1) to (5,1);%v_4 v_5
			\draw (5,1) to (6,1);%v_5 v_6
			\draw (1,1) to [bend right=45] (6,1);%v_1 v_6
		\end{scope}      	  		
		\begin{scope}[line width=1pt]
			\draw (1,1) to (2,3);%x v_1
			\draw (2,1) to (2,3);%x v_2
			\draw (3,1) to (2,3);%x v_3
		\end{scope}
		
		\fill (2,3) circle (3pt);%x
		\fill (5,3) circle (3pt);%y
		\fill (1,1) circle (3pt);%v_1
		\fill (2,1) circle (3pt);%v_2
		\fill (3,1) circle (3pt);%v_3
		\fill (4,1) circle (3pt);%v_4
		\fill (5,1) circle (3pt);%v_5
		\fill (6,1) circle (3pt);%v_6
		\node at (1.75,2.75) {$x$};
		\node at (5.1,2.7) {$y$};
		\node at (1,0.7) {$v_1$};
		\node at (2,0.7) {$v_2$};
		\node at (3,0.7) {$v_3$};
		\node at (4,0.7) {$v_4$};
		\node at (5,0.7) {$v_5$};
		\node at (6,0.7) {$v_6$};%v_6
		\node at (3.5,3.65) {$P_{\ell-6}$};
	\end{tikzpicture}
	\begin{tikzpicture}[global scale=0.8]
		\begin{scope}[line width=1pt, blue]
			\draw (2,3) to [bend left] (5,3);
			\draw (1,1) to (2,1);%v_1 v_2
			\draw (2,1) to (3,1);%v_2 v_3
			\draw (3,1) to (4,1);%v_3 v_4
			\draw (4,1) to (5,1);%v_4 v_5
			\draw (5,1) to (6,1);%v_5 v_6
		\end{scope}      	  		
		\begin{scope}[line width=1pt]
			\draw (6,1) to (2,3);%x v_6
			\draw (6,1) to (5,3);%y v_6
		\end{scope}
		\begin{scope}[line width=1pt, red, dashed]
			\draw (1,1) to [bend right=45](6,1);%v_1 v_6
		\end{scope}
		\fill (2,3) circle (3pt);%x
		\fill (5,3) circle (3pt);%y
		\fill (1,1) circle (3pt);%v_1
		\fill (2,1) circle (3pt);%v_2
		\fill (3,1) circle (3pt);%v_3
		\fill (4,1) circle (3pt);%v_4
		\fill (5,1) circle (3pt);%v_5
		\fill (6,1) circle (3pt);%v_6
		\node at (2,2.7) {$x$};
		\node at (4.9,2.7) {$y$};
		\node at (1,0.7) {$v_1$};
		\node at (2,0.7) {$v_2$};
		\node at (3,0.7) {$v_3$};
		\node at (4,0.7) {$v_4$};
		\node at (5,0.7) {$v_5$};
		\node at (6,0.7) {$v_6$};%v_6
		\node at (3.5,3.65) {$P_{\ell-6}$};
	\end{tikzpicture}
	\begin{tikzpicture}[global scale=0.8]
		\begin{scope}[line width=1pt, blue]
			\draw (2,3) to [bend left] (5,3);
			\draw (1,1) to (2,1);%v_1 v_2
			\draw (2,1) to (3,1);%v_2 v_3
			\draw (3,1) to (4,1);%v_3 v_4
			\draw (4,1) to (5,1);%v_4 v_5
		\end{scope}      	  		
		\begin{scope}[line width=1pt]
			\draw (5,1) to (2,3);%x v_5
			\draw (5,1) to (5,3);%y v_5
		\end{scope}
		\begin{scope}[line width=1pt, white]
			\draw (1,1) to [bend right=45](6,1);%v_1 v_6
		\end{scope}
		\begin{scope}[line width=1pt, red, dashed]
			\draw (5,1) to (6,1);%v_5 v_6
		\end{scope}
		\fill (2,3) circle (3pt);%x
		\fill (5,3) circle (3pt);%y
		\fill (1,1) circle (3pt);%v_1
		\fill (2,1) circle (3pt);%v_2
		\fill (3,1) circle (3pt);%v_3
		\fill (4,1) circle (3pt);%v_4
		\fill (5,1) circle (3pt);%v_5
		\fill (6,1) circle (3pt);%v_6
		\node at (2,2.7) {$x$};
		\node at (4.85,2.7) {$y$};
		\node at (1,0.7) {$v_1$};
		\node at (2,0.7) {$v_2$};
		\node at (3,0.7) {$v_3$};
		\node at (4,0.7) {$v_4$};
		\node at (5,0.7) {$v_5$};
		\node at (6,0.7) {$v_6$};%v_6
		\node at (3.5,3.65) {$P_{\ell-5}$};
	\end{tikzpicture}
	\caption{Force a $P_\ell$ in Subcase 1.3.}
\end{figure}

Builder joins $v_5$ to a new vertex $v_6$ in the next move. If $v_5v_6$ is blue and $\ell=7$, Builder joins $v_6$ to three new vertices $v_7,v_8,v_9$. At least one of the three edges, say $v_6v_7$, is blue. Thus we obtain a blue $P_7$ in nine rounds. If $v_5v_6$ is blue and $\ell\ge 8$, Builder forces a blue $P_{\ell-6}$ in $\flo{3\ell/2}-9$ rounds by induction, whose end vertices are denoted by $x$ and $y$. Builder joins $v_1$ and $v_6$ in the next move. If $v_1v_6$ is blue, then all $v_i$'s for $1\le i\le 6$ form a cycle. Builder joins $x$ to $v_1,v_2,v_3$ respectively. At least one of $xv_1, xv_2, xv_3$ is blue. We may assume that $xv_1$ is blue without loss of generality. Accordingly, $v_2v_3v_4v_5v_6v_1xP_{\ell-6}y$ is a blue $P_\ell$. If $v_1v_6$ is red, Builder chooses $v_6x$ and $v_6y$ in the last two moves. We may assume that $v_6y$ is blue without loss of generality. It follows that $v_1v_2v_3v_4v_5v_6yP_{\ell-6}x$ is a blue $P_\ell$.

If $v_5v_6$ is red, Builder forces a blue $P_{\ell-5}$ with end vertices $x$ and $y$ in $\flo{3\ell/2}-7$ rounds by induction. Builder chooses $v_5x$ and $v_5y$ in the last two moves. To avoid a red $K_{1,3}$, we may assume that $v_5y$ is blue without loss of generality. It follows that $v_1v_2v_3v_4v_5yP_{\ell-5}x$ is a blue $P_\ell$. In all three cases, we obtain a blue $P_\ell$ in at most $\flo{3\ell/2}$ rounds.

\begin{figure}[htp]
	\centering
	\begin{tikzpicture}
		\begin{scope}[line width=1pt, blue]
			\draw (0,3) to [bend left] (4,3);
			\draw (1,1) to (2,1);%v_1v_2
			\draw (2,1) to (3,1);%v_2 v_3
			\draw (1,1) to (2,2);%v_1v_4
			\draw (0,3) to (2,2);%xv_4
		\end{scope}
		
		\begin{scope}[line width=1pt, red, dashed]
			\draw (2,1) to (2,2);%v_2 v_4
			\draw (3,1) to (2,2);%v_3 v_4
		\end{scope}
		
		\fill (1,1) circle (3pt);%v_1
		\fill (2,1) circle (3pt);%v_2
		\fill (2,2) circle (3pt);%v_4
		\fill (3,1) circle (3pt);%v_3
		\fill (0,3) circle (3pt);%x
		\fill (4,3) circle (3pt);%y
		\node at (1,0.7) {$v_1$};
		\node at (2,0.7) {$v_2$};
		\node at (3,0.7) {$v_3$};
		\node at (2,2.3) {$v_4$};
		\node at (0,2.7) {$x$};
		\node at (4,2.7) {$y$};
		\node at (2,3.8) {$P_{\ell-4}$};
	\end{tikzpicture}
	\hspace{40pt}
	\begin{tikzpicture}
		\begin{scope}[line width=1pt, blue]
			\draw (0,3) to [bend left] (4,3);
			\draw (1,1) to (2,1);%v_1v_2
			\draw (2,1) to (3,1);%v_2 v_3
			\draw (3,1) to (2,2);%v_3 v_4
		\end{scope}
		\begin{scope}[line width=1pt]
			\draw (0,3) to (2,2);%xv_4
			\draw (4,3) to (2,2);%yv_4
		\end{scope}
		\begin{scope}[line width=1pt, red, dashed]
			\draw (2,1) to (2,2);%v_2 v_4
		\end{scope}
		
		\fill (1,1) circle (3pt);%v_1
		\fill (2,1) circle (3pt);%v_2
		\fill (2,2) circle (3pt);%v_4
		\fill (3,1) circle (3pt);%v_3
		\fill (0,3) circle (3pt);%x
		\fill (4,3) circle (3pt);%y
		\node at (1,0.7) {$v_1$};
		\node at (2,0.7) {$v_2$};
		\node at (3,0.7) {$v_3$};
		\node at (2,2.3) {$v_4$};
		\node at (0,2.7) {$x$};
		\node at (4,2.7) {$y$};
		\node at (2,3.8) {$P_{\ell-4}$};
	\end{tikzpicture}
	\caption{Force a $P_\ell$ in Case 2.}
\end{figure}
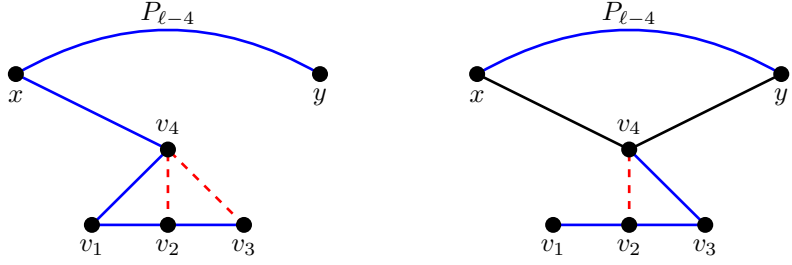

\textbf{Case 2.} A star with two edges blue and one edge red appears in the first three rounds.

Assume that the blue path is $v_1v_2v_3$, and the red edge is $v_2v_4$. Since $\tilde{r}(K_{1,3},P_{\ell-4})\le \flo{3\ell/2}-6$ by induction, Builder forces a blue $P_{\ell-4}$ in the next $\flo{3\ell/2}-6$ rounds, whose end vertices are denoted by $x$ and $y$. Then he chooses $v_3v_4$ in the next move. If $v_3v_4$ is red, he draws two edges $v_1v_4$ and $v_4x$, both of which are forced to be blue. Hence there is a blue path $v_3v_2v_1v_4xP_{\ell-4}y$ of order $\ell$ in $\flo{3\ell/2}$ rounds. If $v_3v_4$ is blue, he draws two edges $v_4x$ and $v_4y$. To avoid a red $K_{1,3}$, at least one of $v_4x$ and $v_4y$ is blue, say, $v_4x$ is blue. Thus, there is a blue path $v_1v_2v_3v_4xP_{\ell-4}y$ of order $\ell$ in $\flo{3\ell/2}$ rounds.

\textbf{Case 3.} A star with two edges red and one edge blue appears in the first three rounds.

Assume that the red path is $v_1v_2v_3$, and the blue edge is $v_2u_2$. We extend this red path in the following way. First Builder joins $v_3$ to two new vertices $u_3$ and $v_4$. If both $v_3u_3$ and $v_3v_4$ are blue, then we have found the required red path. If not, the two edges $v_3u_3$ and $v_3v_4$ have to be one red and one blue, since otherwise there is a red $K_{1,3}$, which contradicts our assumption. Without loss of generality, assume that $v_3u_3$ is blue and $v_3v_4$ is red. For each $i\ge 4$, if $v_{i-1}v_i$ is red, Builder joins $v_i$ to two new vertices $u_i$ and $v_{i+1}$. If both $v_iu_i$ and $v_iv_{i+1}$ are blue, then we stop the procedure. Otherwise, assume that $v_iu_i$ is blue and $v_iv_{i+1}$ is red. The procedure stops when either the red path has length $\flo{\ell/2}+1$, or both $v_tu_t$ and $v_tv_{t+1}$ are blue for some $t$ with $3\le t\le \flo{\ell/2}+1$.

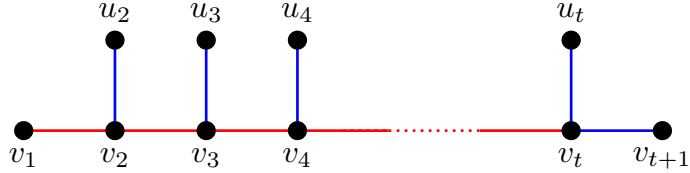
\begin{figure}[htp]
	\centering
	\begin{tikzpicture}[global scale=1.2]
		\begin{scope}[line width=1pt, blue]
			\draw (7,1) to (8,1);
			\draw (2,1) to (2,2);
			\draw (3,1) to (3,2);
			\draw (4,1) to (4,2);
			\draw (7,1) to (7,2);
		\end{scope}      	  		
		
		\begin{scope}[line width=1pt, red]
			\draw (1,1) to (2,1);
			\draw (2,1) to (3,1);
			\draw (3,1) to (4,1);
			\draw (4,1) to (5,1);
			\draw (6,1) to (7,1);
		\end{scope}
		\begin{scope}[line width=1pt, red, dotted]
			\draw (4.5,1) to (5,1);
			\draw (5,1) to (6,1);
		\end{scope}
		\fill (1,1) circle (3pt);%v_1
		\fill (2,1) circle (3pt);%v_2
		\fill (3,1) circle (3pt);%v_3
		\fill (4,1) circle (3pt);%v_4
		\fill (7,1) circle (3pt);%v_t
		\fill (8,1) circle (3pt);%v_{t+1}
		\fill (2,2) circle (3pt);%u_2
		\fill (3,2) circle (3pt);%u_3
		\fill (4,2) circle (3pt);%u_4
		\fill (7,2) circle (3pt);%u_t
		\node at (1,0.7) {$v_1$};
		\node at (2,0.7) {$v_2$};
		\node at (3,0.7) {$v_3$};
		\node at (4,0.7) {$v_4$};
		\node at (7,0.7) {$v_t$};
		\node at (2,2.3) {$u_2$};
		\node at (3,2.3) {$u_3$};
		\node at (4,2.3) {$u_4$};
		\node at (7,2.3) {$u_t$};
		\node at (8,0.7) {$v_{t+1}$};		
	\end{tikzpicture}
	
	\caption{The graph that Builder constructs in the first phase of Case 3.}
\end{figure}

If the red path has length $\flo{\ell/2}+1$, Builder joins $u_i$ to $v_{i+1}$ for each $i$ with $2\le i\le \flo{\ell/2}$. All edges $u_iv_{i+1}$ have to be blue to avoid a red $K_{1,3}$. Thus, if $\ell$ is even, $v_2u_2v_3u_3\cdots v_{\ell/2+1}u_{\ell/2+1}$ is a blue $P_\ell$, and if $\ell$ is odd, $u_1v_2u_2v_3u_3\cdots v_{(\ell-1)/2+1}u_{(\ell-1)/2+1}$ is a blue $P_\ell$, where $u_1$ is a new vertex. In both cases, Builder can force a blue $P_\ell$ in $\ell-1+\flo{\ell/2}+1$ rounds, which is $\flo{3\ell/2}$ rounds.

Now we consider the other case that there exists an integer $t$ with $3\le t\le \flo{\ell/2}+1$ such that both $v_tu_t$ and $v_tv_{t+1}$ are blue edges. Builder joins $v_i$ to $u_{i+1}$ for each $i$ with $2\le i\le t-1$. If $t=\flo{\ell/2}+1$ and $\ell$ is odd, then $u_2v_2u_3v_3\cdots u_tv_tv_{t+1}$ is a blue path of order $\ell$. If $t=\flo{\ell/2}+1$ and $\ell$ is even, then $u_2v_2u_3v_3\cdots u_tv_t$ is a blue path of order $\ell$. In both cases, a blue $P_\ell$ can be forced in $\flo{3\ell/2}$ rounds.

If $t=\flo{\ell/2}$, Builder joins $v_1$ to $v_{t+1}$. If $\ell$ is even and $v_1v_{t+1}$ is blue, then we can find a blue $P_\ell$, which is $u_2v_2u_3v_3\cdots u_tv_tv_{t+1}v_1$. If $\ell$ is even and $v_1v_{t+1}$ is red, then Builder chooses $v_1u_2$, and $v_1u_2v_2u_3v_3\cdots u_tv_tv_{t+1}$ is a blue $P_\ell$. If $\ell$ is odd and $v_1v_{t+1}$ is blue, then Builder draws two edges $v_1x$ and $v_1y$, at least one of which is blue, say, $v_1x$ is blue. It follows that $u_2v_2u_3v_3\cdots u_tv_tv_{t+1}v_1x$  is a blue $P_\ell$. If $\ell$ is odd and $v_1v_{t+1}$ is red, then Builder draws two edges $v_1x$ and $v_1u_2$, both of which are forced to be blue. Hence $xv_1u_2v_2u_3v_3\cdots u_tv_tv_{t+1}$ is a blue $P_\ell$. It is not difficult to check that the total number of rounds is at most $\flo{3\ell/2}$. Thus, we assume that $3\le t\le \flo{\ell/2}-1$. 

Since $\ell-2t\ge 2$, we have $\tilde{r}(K_{1,3}, P_{\ell-2t})\le \flo{3(\ell-2t)/2}$ by induction. Thus, Builder can force a blue $P_{\ell-2t}$ in the next $\flo{3(\ell-2t)/2}$ rounds. Assume that the end vertices of this $P_{\ell-2t}$ is $x$ and $y$. Builder joins $v_1$ to $v_{t+1}$. If $v_1v_{t+1}$ is red, Builder chooses $xv_1$ and $v_1u_2$, which are forced to be blue. Hence $yP_{\ell-2t}xv_1u_2v_2u_3v_3\cdots u_tv_tv_{t+1}$ is a blue $P_\ell$. If $v_1v_{t+1}$ is blue, Builder draws two edges $v_1x$ and $v_1y$, at least one of which is blue, say, $v_1x$. It follows that $u_2v_2u_3v_3\cdots u_tv_tv_{t+1}v_1xP_{\ell-2t}y$ is a blue $P_\ell$. Extending the path $xP_{\ell-2t}y$ to a blue $P_\ell$, we have used $2t$ blue edges and $t$ red edges. Accordingly, the total number of rounds is at most $\flo{3(\ell-2t)/2}+3t$, which is $\flo{3\ell/2}$. Therefore, $\tilde{r}(K_{1,3}, P_\ell)\le \flo{3\ell/2}$.

%% If you have bibdatabase file and want bibtex to generate the
%% bibitems, please use
%%

%% else use the following coding to input the bibitems directly in the
%% TeX file.

\end{document}